\documentclass{article}
\usepackage{amsthm}
\usepackage{amsfonts,amssymb,amsmath}
\usepackage{enumitem}
\usepackage{titlesec}
\usepackage{fouriernc}
\usepackage[T1]{fontenc}
\usepackage{chngcntr}
\counterwithin{figure}{section}

\newlist{gcases}{enumerate}{1}
\setlist[gcases,1]{
  label={{\it Case}~{\it \Alph*}.},
  topsep=0ex,
  leftmargin=0in,
  labelsep=.1in,
  itemindent=.7in,
  itemsep=0ex
}
\newlist{tenumerate}{enumerate}{1}
\setlist[tenumerate,1]{
  label={(\arabic*)},
  topsep=0ex,
  leftmargin=.3in,
  labelsep=.1in,
  itemindent=0in,
  itemsep=0ex
}

\titleformat{\section}
  {\normalfont\bfseries}   
  {}                             
  {0pt}                          
  {Section \thesection\quad}    

\titleformat{name=\section,numberless}
  {\normalfont\bfseries}
  {}
  {0pt}
  {}


\newlength{\tabwidth}
\newlength{\tabheight}
\setlength{\tabwidth}{2ex}
\setlength{\tabheight}{2ex}
\newlength{\tabrule}
\newlength{\tabwidthx}
\newlength{\tabheightx}

\def\gentabbox#1#2#3#4{\vbox to \tabheight{\setlength{\tabrule}{#3}%
  \setlength{\tabwidthx}{#1\tabwidth}\addtolength{\tabwidthx}{\tabrule}%

\setlength{\tabheightx}{#2\tabheight}\addtolength{\tabheightx}{-\tabheight}%
  \hbox to #1\tabwidth{%
 \hspace{-0.5\tabrule}\rule{\tabrule}{#2\tabheight}\hspace{-\tabrule}%
    \vbox to #2\tabheight{\hsize=\tabwidthx%
      \vspace{-0.5\tabrule}\hrule width\tabwidthx height\tabrule%
      \vspace{-0.5\tabrule}\vfil%
      \hbox to \tabwidthx{\hss#4\hss}%
        \vfil\vspace{-0.5\tabrule}%
      \hrule width\tabwidthx height\tabrule\vspace{-0.5\tabrule}}%
 \hspace{-\tabrule}\rule{\tabrule}{#2\tabheight}\hspace{-0.5\tabrule}}%
  \vspace{-\tabheightx}}}
\def\genblankbox#1#2{\vbox to \tabheight{\vfil\hbox to
#1\tabwidth{\hfil}}}
\def\tabbox#1#2#3{\gentabbox{#1}{#2}{0.4pt}{\strut #3}}

\catcode`\:=13 \catcode`\.=13 \catcode`\;=13
\catcode`\>=13 \catcode`\^=13
\def:#1\\{\hbox{$#1$}}
\def.#1{\tabbox{1}{1}{$#1$}}
\def>#1{\tabbox{2}{1}{$#1$}}
\def^#1{\tabbox{1}{2}{$#1$}}
\def;{\genblankbox{1}{1}\relax}
\catcode`\:=12 \catcode`\.=12 \catcode`\;=12
\catcode`\>=12 \catcode`\^=7

\newtheoremstyle{mytheoremstyle} 
    {\topsep}                    
    {\topsep}                    
    {}                   
    {}                           
    {}                   
    {.}                          
    {0.5em}                       
    {\thmnumber{#2.} \thmname{#1}}  

\newtheoremstyle{myremarkstyle} 
    {\topsep}                    
    {\topsep}                    
    {}                   
    {}                           
    {}                   
    {.}                          
    {0.5em}                       
    {\thmname{#1}}  

\theoremstyle{mytheoremstyle}

\newtheorem{theorem}{THEOREM}[section]
\newtheorem{corollary}[theorem]{COROLLARY}

\newtheorem{proposition}[theorem]{PROPOSITION}
\newtheorem{definition}[theorem]{DEFINITION}

\theoremstyle{myremarkstyle}
\newtheorem{remark*}{REMARK}

\newcommand\T{\mathbf{T}}

\newcommand\oT{\overline{\mathbf{T}}}

\begin{document}

\noindent
{\bf \large Annihilators and associated varieties of Harish-Chandra modules for $SO^*(2n)$}

\vspace{.3in}
\noindent
WILLIAM MCGOVERN \\
\vspace {.1in}
{\it \small Department of Mathematics, Box 354350, University of Washington, Seattle, WA 98195}

\section*{Introduction}
\noindent In this paper we produce recipes for the group $SO^*(2n)$ analogous to those of \cite{M16} for Sp$(p,q)$, attaching a pair $(\T_1,\oT_2)$ to a signed involution corresponding to a simple Harish-Chandra module for $SO^*(2n)$ of trivial infinitesimal character, where $\T_1$ is a domino tableau and $\oT_2$ an equivalence class of signed tableaux of the same shape.  As before the domino tableau will parametrize the annihilator of the corresponding Harish-Chandra module while the class of signed tableaux parametrizes its associated variety.  Our construction generalizes to signed involutions outside the parametrizing set for simple Harish-Chandra modules for $SO^*(2n)$; we will also give a representation-theoretic interpretation of it in this larger context.

\section{Cartan subgroups and Weyl groups}

\noindent Let $G=SO^*(2n)$, the group of matrices in $SO(2n,\mathbb C)$ leaving a suitable skew-Hermitian form invariant  (see \cite[Ch. X]{H78}) and  let $\mathfrak g_0$ be its Lie algebra, $\mathfrak g$ the complexification of $\mathfrak g_0$.  Let $\theta$ be the usual Cartan involution of $G$ or of $\mathfrak g$ and let $\mathfrak k+\mathfrak p$ be the corresponding Cartan decomposition of $\mathfrak g$.  The normalizer $K$ of $\mathfrak k$ in $G$ is then a maximal compact subgroup.  As a choice of simple roots in $\mathfrak g$ relative to a Cartan subalgebra we take $e_2+e_1,e_2-e_1,\ldots,e_n-e_{n-1}$, following \cite{G90,MP16}.

There are $[n/2]+1$ conjugacy classes of Cartan subgroups in $G$.  If we take $H_0$ to be a compact Cartan subgroup of $G$ and define $H_i$ inductively for $i>0$ as the Cayley transform of $H_{i-1}$ through $e_{n-2i+1}-e_{n-2i+2}$ for $1\le i\le[n/2]$, then the $H_i$ furnish a complete set of representatives for the conjugacy classes of Cartan subgroups of $G$.  The Weyl group $W(H_i)$ is isomorphic to $W_i \ltimes 
S_2^i \times S_{n-2i}$, where $W_i$ denotes the Weyl group of type $B_i$, embedded into the Weyl group $W_n'$ of type $D_n$ as for Sp$(p,q)$ \cite{M16}, and $S_r$ denotes the symmetric group on $r$ letters \cite{M98}.  The subgroups $H_i$ are all connected unless $i=n/2$ and $n$ is even.  For most of this paper we consider only simple Harish-Chandra modules with trivial infinitesimal character in the principal block $B$ of $G$ (corresponding to the trivial representation of the component group of each $H_i$); we will deal with modules in the other block $B'$ later. 

\section{The $\mathcal D$ set and Cartan involutions}
\noindent Using Vogan's classification of simple Harish-Chandra modules with trivial infinitesimal character by $\mathbb Z/2\mathbb Z$-data \cite{V81} we parametrize the simple modules in $B$ combinatorially, as follows. Any involution $\iota$  in $W_n$ is encoded by its {\sl clan} $\sigma$, consisting by definition of a set of ordered pairs $(a_i,\epsilon_i) (a_i,b_i)^+$, and $(a_i,b_i)^-$, where every $\epsilon_i$ is a sign $+$ or $-$, every $a_i,b_i$ lies in $\{1,\ldots,n\}$), and every number from 1 to $n$ appears exactly once among the $a_i$ and $b_i$; given $\sigma$, the corresponding involution $\iota$ sends $a_i$ to $\epsilon_i a_i$ if $(a_i,\epsilon_i)$ lies in $\sigma$, flips $a_i$ and $b_i$ if $(a_i,b_i)^+$ lies in $\sigma$, and flips $a_i$ and $-b_i$ if $(a_i,b_i)^-$ lies in $\sigma$.  By convention we order the pairs in a clan by increasing order of their largest (or only) numerical coordinate and whenever a pair $(a_i,b_i)^+$ or $(a_i,b_i)^-$ occurs in $\sigma$ we assume that $a_i<b_i$.    Denote by $\mathcal S_n$ the set of all clans.  We call a clan even if the number of pairs $(a_i,b_i)^+$ appearing in it has the same parity as the number of pairs $(c_i,-)$ and odd otherwise; denote by $\mathcal S_n'$ the set of even clans and by $I_n'$ the corresponding set of involutions in $I_n$.  We take $\mathcal S_n'$ as a parametrizing set for the modules in $B$.  We will give a representation-theoretic interpretation of odd clans later, together with a parametrization of modules in the non-principal block $B'$ for $G$.

The Cartan involution $\theta$ corresponding to $\theta\in\mathcal S_n$ is constructed in the same way as for Sp$(p,q)$ \cite{M16}: it fixes the unit coordinate vector $e_i$ whenever $(i,\epsilon_i)\in\sigma$ with $\epsilon_i$ a sign; it interchanges $e_i$ and $e_j$ whenever $(i,j)^+\in\sigma$; and it interchanges $e_i$ and $-e_j$ whenever $(i,j)^-\in\sigma$ unless $j-i=\pm1$, in which case it sends both $e_i$ and $e_j$ to their negatives.  The root $e_{i+1}-e_i$ is compact imaginary whenever either $(i,\epsilon_i),(i+1,\epsilon_{i+1})\in\sigma$ with $\epsilon_i = \epsilon_{i+1}$ or $(i,i+1)^-\in\sigma$.  The root $e_2+e_1$ is compact imaginary whenever either $(1,\epsilon_1),(2,\epsilon_2)\in\sigma$ with $\epsilon_1=-\epsilon_2$ or $(1,2)^+\in\sigma$.

\section{The cross action and the Cayley transform}

\noindent For an element $\sigma$ of $\mathcal S_n$ and a pair $i,j$ of indices between 1 and $n$ we define In$(i,j,\sigma)$ as in \cite[Definition 1.9.1]{G93HC}, interchanging the unique occurrences of $i$ and $j$ in $\sigma$ and leaving all others (and all signs) unchanged, provided that at least one of the indices $i,j$ is paired with an index $k\ne i,j$ in $\sigma$; otherwise, we set In$(i,j,\sigma) = \sigma$.  

\begin{proposition}\label{proposition:crossaction}
Let $\alpha_i$ be the simple root $e_i-e_{i-1}, \sigma\in\mathcal S_n$.  Then $s_i\times\sigma$, the cross action of the simple reflection $s_i$ corresponding to $\alpha_i$ on $\sigma$, is given by In$(i-1,i,\sigma)$.  If $\alpha_1=e_2-e_1$, then the cross action of $s_1$ on $\sigma$ is given by changing the signs attached to (the singletons or pairs including) 1 and 2 in In$(1,2,\sigma)$.  The Cayley transform of $\sigma$ through $\alpha_i$ is obtained from $
\sigma$ by replacing the pairs $(i-1,\epsilon)(i,-\epsilon)$ in it by $(i,i+1)^+$ (if $\alpha_i$ is noncompact imaginary for $\sigma$ or by replacing $(i,i+1)^+$ by either $(i-1,+)(i,-)$ or $(i-1,-)(i,+)$, thereby obtaining two clans, if $\alpha_i$ is real.  The Cayley transform of $\sigma$  through $\alpha_1$ similarly either replaces $(1,\epsilon)(2,\epsilon)$ in $\sigma$ by $(1,2)^-$ or $(1,2)^-$ by either $(1,+)(2,+)$ or $(1,-)(2,-)$.
\end{proposition}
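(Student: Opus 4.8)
The plan is to follow the blueprint of the analogous computation for $\mathrm{Sp}(p,q)$ in \cite{M16}, reducing each assertion to Vogan's combinatorics of $\mathbb{Z}/2\mathbb{Z}$-data \cite{V81} and then translating through the explicit recipe of Section 2 that builds $\theta_\sigma$ from $\sigma$. The difference roots $e_i-e_{i-1}$ embed into the type-A situation, where the cross action is already known to be the interchange operation $\In$ of \cite{G93HC}; the only genuinely new point is the behaviour at the fork root $e_2+e_1$, where the coordinate swap must be corrected by sign changes.

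For the cross action through a difference root $\alpha_i=e_i-e_{i-1}$ I would argue as follows. Conjugation of $\theta_\sigma$ by $s_{\alpha_i}$ interchanges the coordinates $i-1$ and $i$. When $\alpha_i$ is complex for $\theta_\sigma$, that is, when at least one of $i-1,i$ is paired with a third index, this conjugation produces the genuinely distinct involution obtained by relabelling $i-1\leftrightarrow i$, which is exactly $\In(i-1,i,\sigma)$; I would confirm the outcome shape by shape (a singleton against a paired index, a $\pm$-pair whose other end lies outside $\{i-1,i\}$, and so on), each case being identical to the symplectic check. When instead $i-1,i$ are both singletons or form a pair together, $\alpha_i$ is imaginary or real for $\theta_\sigma$, so $s_{\alpha_i}$ commutes with $\theta_\sigma$ and the cross action fixes the underlying orbit with no sign change; this matches precisely the clause in the definition of $\In$ that sets $\In(i-1,i,\sigma)=\sigma$ in exactly these degenerate configurations.

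The fork root $e_2+e_1$ is the signed transposition sending $e_1\mapsto-e_2$ and $e_2\mapsto-e_1$, so conjugating $\theta_\sigma$ by it both swaps the indices $1,2$ and negates the two coordinates. The index swap again contributes $\In(1,2,\sigma)$, while the negation flips the signs attached to the singletons or pairs containing $1$ and $2$; reading this off against the compact-imaginary criterion of Section 2 (which records that $e_2+e_1$ is compact imaginary exactly when $(1,\epsilon_1),(2,\epsilon_2)\in\sigma$ with $\epsilon_1=-\epsilon_2$, or $(1,2)^+\in\sigma$) yields the stated description.

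For the Cayley transforms I would invoke the standard rule that $c_\alpha$ is defined on a parameter only when $\alpha$ is noncompact imaginary, producing a single parameter on the adjacent less compact Cartan, or real, producing the two-element fibre over the more compact Cartan. Using the compactness classification of Section 2, the noncompact imaginary occurrences of $\alpha_i=e_i-e_{i-1}$ are exactly the oppositely signed singleton pairs $(i-1,\epsilon)(i,-\epsilon)$; since the Cayley transform through a difference root introduces the coordinate swap $e_{i-1}\leftrightarrow e_i$ into the involution, it merges these into the plus-pair $(i-1,i)^+$, and the real case reverses this, the two preimages $(i-1,+)(i,-)$ and $(i-1,-)(i,+)$ being exactly the order-two fibre. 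For the fork root the sum-root Cayley transform introduces $e_1\leftrightarrow-e_2$ and hence a minus-pair, so the equally signed singletons $(1,\epsilon)(2,\epsilon)$ merge into $(1,2)^-$ while the real transform returns $(1,+)(2,+)$ or $(1,-)(2,-)$. The main obstacle throughout is the sign bookkeeping at the fork: because $D_n$ branches there, neither the cross action nor the Cayley transform is a bare relabelling, and pinning down the correct signs requires careful use of the signed-transposition structure of $s_{e_2+e_1}$ together with the compact-imaginary criterion of Section 2; once these are fixed, the difference-root cases follow exactly as in \cite{M16}.
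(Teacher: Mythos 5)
Your complex case and your Cayley-transform descriptions follow the same direct-computation route as the paper (whose proof simply carries out the analogue of \cite[\S\S1.8,9,12]{G93HC}), but there is a genuine error in your degenerate case of the cross action. You argue that when $i-1$ and $i$ are both singletons, $s_{\alpha_i}$ commutes with $\theta_\sigma$, hence the cross action is trivial, ``matching'' the clause $\In(i-1,i,\sigma)=\sigma$. This conflates the cross action with conjugation of the Cartan involution: the cross action acts on the whole Vogan parameter, including the $\mathbb Z/2\mathbb Z$-character data \cite{V81}, which conjugation never sees. When the two singletons carry \emph{opposite} signs, $\alpha_i$ is noncompact imaginary and the cross action is nontrivial: it interchanges the signs, $(i-1,\epsilon)(i,-\epsilon)\mapsto(i-1,-\epsilon)(i,\epsilon)$. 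Indeed these roots are type I, since $W(G,H_0)\cong S_n$ is generated by reflections in the \emph{compact} roots and $s_{\alpha_i}$ is not among them when the signs at $i-1,i$ differ; and your own Cayley statement forces this: if $c^{\alpha_i}$ merges the two clans $(i-1,+)(i,-)$ and $(i-1,-)(i,+)$ into the single clan $(i-1,i)^+$ with double-valued inverse, then Vogan's theory identifies that two-element fibre with the cross-action orbit $\{\sigma,\,s_{\alpha_i}\times\sigma\}$, so the cross action cannot fix $\sigma$. For the same reason, the ``standard rule'' you invoke (noncompact imaginary gives a single value, real gives two) is not a general fact but exactly the type I alternative, reversed for type II roots, and so is itself something to be verified here rather than quoted.

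The same conflation makes your fork-root computation internally inconsistent: for a clan containing $(1,\epsilon)(2,-\epsilon)$, conjugation of $\theta_\sigma$ by $s_{e_2+e_1}$ is just as trivial as conjugation by $s_{e_2-e_1}$, yet you extract sign flips from the ``negation of coordinates'' in the first case and no sign change in the second. Worse, applied to such a clan your fork formula makes $s_{e_2+e_1}\times\sigma$ nontrivial even though $e_2+e_1$ is \emph{compact} imaginary for $\sigma$ by the very criterion from Section 2 that you quote, and cross actions through compact imaginary roots are always trivial. All of the statements become mutually consistent --- with the Cayley assertions, with the compactness criteria, and with the later uses of the proposition (e.g.\ connecting the discrete series of a block by cross actions) --- only when $\In$ is read as the plain interchange of occurrences, as in \cite{G93HC}, so that on oppositely signed singletons $\In(i-1,i,\sigma)$ itself swaps the signs; then the difference-root formula is nontrivial exactly on noncompact imaginary configurations, and the fork formula (interchange, then flip both signs) collapses to the identity exactly on compact ones. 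A correct proof must track the character data (equivalently the type I/type II dichotomy) to produce these signs; the conjugation shortcut cannot.
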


\begin{proof} This follows from a direct computation, along the lines of \cite[\S\S1.8,9,12]{G93HC}.  Note that both the Cayley transform and the cross action preserve evenness (or oddness) of a clan.
\end{proof}

\section{$\tau$-invariants and wall-crossing operators in rank 2}

\noindent The definition of the
 $\tau$-invariant for Sp$(p,q)$ carries over to $G$, except that $2e_1$ is no longer a simple root while $e_1+e_2$ is a new simple root.  As noted in \cite{MP16}, the criterion for $e_1+e_2$ to lie in the $\tau$-invariant of a domino tableau is that the $2$-domino is vertical and lies either in the leftmost column of the tableau or the column to its right.  We extend these definitions to $\mathcal S_M,\mathcal S_M'$ as in \cite{G93HC}.  The root $e_1+e_2$ lies in the $\tau$-invariant of an involution $\sigma$ if and only if either it is sent to a negative root by $\theta$ or the indices $1,2$ are paired with opposite signs $\epsilon_1,\epsilon_2$ in $\sigma$.  Given a pair $\{\alpha,\beta\}$ of distinct nonorthogonal simple roots, we define the wall-crossing operator $T_{\alpha\beta}$ on the level of clans (or simple Harish-Chandra modules) as in \cite{G93HC} and \cite{V79}.  It is single-valued, implemented either by the cross action by the simple reflection corresponding to $\alpha$ or $\beta$ or the Cayley transform through $\alpha$ or $\beta$ itself,, whichever of these operations has the required effect on the $\tau$-invariant of $\sigma$.  We will define these operators on domino tableaux below and show that their actions on clans and domino tableaux are compatible.  There is no uniform recipe for their action on signed tableaux (unlike the situation for Sp$(p,q)$), but their action on associated varieties of simple Harish-Chandra modules is always trivial.

\section{Wall-crossing operators in rank 4}

\noindent We also have wall-crossing operators $T_{D\beta}$ and $T_{\beta D}$ attached to the unique set $\{e_2-e_1,e_2+e_1,e_3-e_2,e_4-e_3\}$  of simple roots spanning a root system $R$ of type $D_4$, if the rank of $G$ is at least 4 \cite{GV92,MP16}.  These operators can be either single- or double-valued.  More precisely, the operator $T_{D\beta}$ takes clans of type $\mathcal A_\beta$ in the sense of \cite[4.4]{MP16} to clans of type $\mathcal D$; the operator $T_{\beta D}$ does the reverse. If the indices $1,2,3,4$ are all paired with each other or with signs in $\sigma$ and if $\sigma$ is of type $\mathcal A_\beta$, then $\sigma'=T_{D\beta}\sigma$ is obtained from $\sigma$ by replacing $(1,\epsilon)(3,\epsilon')$ in $\sigma$ by the single pair $(1,3)^{\epsilon\epsilon'}$, where we multiply the signs $\epsilon,\epsilon'$ in the obvious way.  The operator $T_{\beta D}$ reverses this action, replacing $(1,3)^{\epsilon}$ in $\sigma$ by $(1,\epsilon_1)(3,\epsilon_2)$, with the $\epsilon_i$ chosen in the two possible ways to make their product equal to $\epsilon$.  In general, the action of $T_{D\beta}$ or $T_{\beta D}$ on a clan $\sigma$ in its domain is more complicated to write down, but it is always implemented by a composition of cross actions and Cayley transforms corresponding to simple roots in $R$.  We will use this fact to show that the actions of these operators on clans and domino tableaux are compatible in \S9. These operators act trivially on associated varieties.

 \section{The algorithm}
 
\noindent As for Sp$(p,q)$ in \cite{M16} we now attach a pair $H(\sigma)=(\T_1,\oT_2)$ to a parameter $\sigma$, where $\T_1$ is a domino tableau and $\oT_2$ is an equivalence class of signed tableaux of the same shape as $\T_1$, this shape being a doubled partition of $2n$, that is, one whose parts occur in equal pairs.  Any representative $\T_2$ of $\oT_2$ will thus also have rows occurring in pairs of equal length; each row in a pair (called a double row) will begin with the same sign.   We define the equivalence relation on signed tableaux as in \cite{M16}, replacing even double rows by odd ones, so that any signed tableau is equivalent to any other obtained from it by changing all signs in any pair $D_1,D_2$ of double rows of the same odd length, of or different odd lengths whenever there is an open cycle with its hole in one of $D_1,D_2$ and its corner in the other.  The algorithm for computing $H(\sigma)$ is quite similar to its counterpart for Sp$(p,q)$ in \cite{M16}, but different enough that we give it in detail.
  
\begin{definition}\label{definition:addsingleton}
Let $\sigma\in\mathcal S_n$.  Order the pairs in $\sigma$ as above, by increasing size of their largest or only numerical coordinates.  Construct the pair $H(\sigma) = (\T_1,\oT_2)$ inductively, starting from a pair of empty tableaux.  At each step we insert the next element $(i,\epsilon)$ or $(i,j)^\epsilon$ into the current pair of tableaux.  Assume first that the next element of $\sigma$ is $(i,\epsilon)$ and choose any representative $\T_2$ of $\oT_2$.

\begin{tenumerate}

\item If the first double row of $\T_2$ ends in $-\epsilon$, then add $\epsilon$ to the ends of both of its rows and add a vertical domino labelled $i$ to the end of the first double row of $\T_1$.

\item If not and if the first double row of $\T_2$ is even, then we look first for a lower double row of $\T_2$ with the same length ending in $-\epsilon$; if these is such a double row, interchange it with the first double row of $\T_2$ and proceed as above.  Otherwise we start over, trying to insert $(i,\epsilon)$ into the highest double row of $\T_2$ strictly shorter than its first double row.  (In the end we may have to insert a domino labelled $i$ into a new double row of $\T_1$, using $\epsilon$ for both of the signs in the new double row of $\T_2$.)

\item If not and the first (or first available) double row of $\T_2$ has even length but there is more than one double row of this length, but none ending in $-\epsilon$, then we change all signs in the first two double rows of $\T_2$ of this length and then proceed as in the previous case.

\item Otherwise the highest available double row $R$ in $\T_2$ has odd length, ends in $\epsilon$, and is the only double row of this length.  In this case we look at the domino in $\T_1$ occupying the last square in the lower row of $R$.  If we move $\T_1$ through the open cycle of this domino, we find that its shape changes by removing this square and adding a square either at the end of the higher row of some double row $R'$ of $\T_1$ or else in a new row, not in $\T_1$.  It if lies in a new row, then change all signs in $R$ and proceed as above.  If it does not lie in a new row and $R'\ne R$, then change the signs of $\T_2$ in both $R$ and $R'$ and proceed as above (again not actually moving $\T_1$ through this open cycle).  Finally, if $R=R'$, then move $\T_1$ through the open cycle, place a new horizontal domino labelled $i$ at the end of the lower row of $R$ in $\T_1$ and choose the signs in $\T_2$ so that both rows of $R$ now end in $\epsilon$ while all other signs in $\T_2$ remain the same as before.
\end{tenumerate}
\end{definition}

\begin{definition}\label{definition:addpair}
Retain the notation of the previous definition but assume now that the next element of $\sigma$ is $(i,j)^\epsilon$.  We begin by inserting a horizontal domino labelled $i$ at the end of the first row of $\T_1$ if $\epsilon=+$, or a vertical domino labelled $i$ at the end of the first column of $\T_1$ if $\epsilon=-$, following the procedure of \cite{G90} (and thus bumping dominos with higher labels as needed).  We obtain a new tableau $\T'$ whose shape is obtained from that of $\T_1$ by adding a single domino $D$, lying either in some double row $R$ of $\T_1$ or else in a new row (in which case $D$ must be horizontal).  Let $\ell$ be the length of $R$ (before $D$ was added).  

\begin{tenumerate}

\item If $D$ is horizontal and $\ell$ is odd, then add a domino labelled $j$ to $\T'$ immediately below the position of $D$, in the lower row of $R$.  Choose signs in $\T_2$ so that both rows of $R$ end in the same sign as before, leaving all other signs the same. 

\item If $D$ is horizontal and $\ell$ is even, then $\T'$ does not have special shape but its shape becomes special if one moves through just one open cycle.  Move through this cycle, so that $R$ is now a genuine double row, and choose the signs in $\T_2$ so that both rows of $R$ end in different signs than they did before, leaving all other signs the same.  If a new double row has been created, put $+$ signs in both squares of $\T_2$.  Now $\T_2$ has either two more $+$ signs than before or else two more $-$ signs.  Insert a vertical domino labelled $j$ to the first available double row in $\T_1$ strictly below $R$, using the procedure of the previous definition.  The sign attached to $j$ is $-$ if $\T_2$ gained two $+$ signs and is $+$ otherwise.

\item If $D$ is vertical and $\ell$ is odd, then $R$ is still a double row; choose signs so that its rows now end in the same sign as they did before, leaving all other signs unchanged.  If a new double row was created, then put two $+$ signs (for definiteness) into the corresponding squares in $\T_2$.  Add a vertical domino labelled $j$ to the first available double row strictly below $R$ in $\T'$, as in the previous case, attaching to it this time the {\sl opposite} sign used in that case (so that if $\T_2$ gained two $+$ signs at the first step, then the sign attached to $j$ is again $+$, and similarly for $-$).

\item If $D$ is vertical and $\ell$ is even, then proceed as in the previous case.
\end{tenumerate}
\end{definition}
\vskip .1in

\noindent As for Sp$(p,q)$ one can check that the equivalence class $\oT_2$ does not change if $\T_2$ is replaced at any stage with another representative of its equivalence class.  To compute the associated variety of the Harish-Chandra module corresponding to $\sigma$ we choose any representative of $\oT_2$ and normalize it so that all odd double rows begin with $+$; as for Sp$(p,q)$, this variety is the closure of one nilpotent orbit \cite[5.2]{T07} and such orbits (via the Kostant-Sekiguchi bijection) are parametrized by signed tableaux with shape a doubled partition of $2n$ in which odd rows begin with $+$, except that such tableaux with only even rows correspond to two orbits \cite[9.3.4]{CM93}.

\section{Wall-crossing operators and domino tableaux}

\noindent We define the $\tau$-invariant of a pair $(\T_1,\oT_2)$ to be that of $\T_1$, as in \cite{M16}. The operator $T_{\alpha\beta}$ is defined on  $\T_1$ as in \cite{M16} if $\alpha,\beta$ are adjacent simple roots $e_i-e_{i-1},e_{i+1}-e_i$ or $e_{i+1}-e_i,e_i-e_{i-1}$ with $i\ge2$.  Thus in particular these operators $T_{\alpha\beta}$ always preserve tableau shape.  If $(\alpha,\beta)$ is $(e_3-e_2,e_2+e_1)$ or $(e_2+e_1,e_3-e_2)$ then $T_{\alpha\beta}$ is defined on a domino tableau as in \cite[4.3.5]{MP16} replacing the extended open cycle of the 3-domino in the first tableau relative to the second by the cycle including the 3-domino.  This cycle is always closed, since the shape of the domino tableaux is a doubled partition; if the cycle including the 3-domino were open, it would have to be  simultaneously up and down in the sense of \cite[\S3]{G93}, a contradiction.  We define the operators $T_{D\beta}$ and $T_{\beta D}$ on domino tableaux $\T_1$ as follows. Assume first that the first four dominos of $\T_1$ from a subtableau $S$ of special shape.   If the cycle of the 4-domino includes the 3-domino, then both operators send $\T_1$ to two tableaux, one obtained from $\T_1$ by interchanging the 3- and 4-dominos, the other obtained from this one by moving through either or both of the cycles through the 3- and 4-dominos, whichever of these is closed.  If the cycle of the 4-domino does not include the 3-domino, then the unique image of $\T_1$ is the tableau obtained from it by interchanging the 3- and 4-dominos.  If $S$ does not have special shape, then move through the appropriate cycles of $\T_1$ to make $S$ have special shape, as in \cite[\S4]{MP16}; these cycles are necessarily closed since if they were open they would necessarily be both up and down in the sense of \cite[\S3]{G93}, a contradiction.  Then proceed as in the case where $S$ is special; there is just one tableau in the image in this case.  It can happen that the image of a clan $\sigma$ under one of these operators consists of two clans $\sigma_1,\sigma_2$ while the image of the domino tableau $\T_1$ attached to $\sigma$ by $H$ is single-valued; if so then $H$ always attaches the same domino tableau to both $\sigma_i$.   As mentioned above, there is no uniform formula for the action of wall-crossing operators on signed tableaux, but the induced action on associated varieties is trivial.

\section{$H$ commutes with $\tau$-invariants}

\noindent As in \cite{G93HC} and \cite{M16}, we prove that our algorithm $H$ computes annihilators of simple Harish-Chandra modules by showing that it commutes with taking $\tau$-invariants and applying wall-crossing operators.  In this section we deal with $\tau$-invariants.  

\begin{proposition}\label{proposition:tau}
Let $\sigma\in\mathcal S_n$ and let $\alpha$ be a simple root for $G$.  Then $\alpha\in\tau(\sigma)$ if and only if $\alpha\in\tau(H(\sigma)$.
\end{proposition}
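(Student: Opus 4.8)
The plan is to reduce the biconditional to a local comparison at the two indices singled out by $\alpha$ and then to verify that comparison by tracing the relevant insertion steps of $H$. Since $\tau(H(\sigma))$ is by definition $\tau(\T_1)$, and since membership of a simple root in $\tau(\T_1)$ is governed by the relative positions of only two dominos, I first record the two criteria side by side. On the clan side, for $\alpha=e_i-e_{i-1}$ the criterion from \cite{G93HC} is a descent condition on the occurrences of $i-1$ and $i$ in $\sigma$, phrased through whether $\theta$ sends $\alpha$ to a negative root, whether $\alpha$ is compact imaginary as recorded in Section 2, or whether $\alpha$ is real of the appropriate parity; for $\alpha=e_2+e_1$ it is the condition recalled in Section 4. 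On the domino side, $e_i-e_{i-1}\in\tau(\T_1)$ is Garfinkle's rule on the dominos labelled $i-1$ and $i$, and $e_2+e_1\in\tau(\T_1)$ holds exactly when the $2$-domino is vertical and occupies the leftmost column or the column to its right, as recalled in Section 4.

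The key reduction is that both criteria are \emph{local}: each depends only on the occurrences of $i-1,i$ (respectively on the positions of the dominos labelled $i-1,i$), and not on the rest of $\sigma$ or of $\T_1$. Consequently it suffices to enumerate the admissible local patterns of $\sigma$ at the indices $i-1,i$ --- each index being a signed singleton, or the smaller or larger member of a $+$-pair or a $-$-pair, with the two possibly lying in a common pair --- and, for each pattern, to determine where $H$ deposits the dominos $i-1$ and $i$. To make this enumeration legitimate I would first prove an invariance lemma: the auxiliary operations used by the algorithm, namely moving $\T_1$ through an open cycle, the sign changes on double rows, and the Garfinkle bumping of higher-labelled dominos in Definitions~\ref{definition:addsingleton} and~\ref{definition:addpair}, never alter the relative position of the dominos $i-1,i$ that is read off by Garfinkle's $\tau$-rule. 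The first two preserve tableau shape and the up/down relations that the rule uses, and the bumping only relocates dominos of strictly larger label, so none of them can create or destroy membership of $\alpha$ in $\tau(\T_1)$.

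With the invariance lemma in hand, the argument becomes a finite case check that parallels the one for $\mathrm{Sp}(p,q)$ in \cite{M16}. For each local pattern I follow the branch of Definition~\ref{definition:addsingleton} or~\ref{definition:addpair} that applies, read off whether the domino labelled $i$ lands in the lower-left relative position that Garfinkle's rule associates with membership of $e_i-e_{i-1}$ in $\tau(\T_1)$, and confirm that this coincides with the clan criterion for the same pattern. The root $e_2-e_1$ and the special root $e_2+e_1$ are treated by the same method but using the two insertion rules for the indices $1,2$ together with the column-position criterion of Section 4; here Proposition~\ref{proposition:crossaction} is useful for checking that the cross action and Cayley transform in the rank-one and rank-two subsystems act compatibly.

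The step I expect to be the main obstacle is the verification in the branches where the parity of an existing double row forces the algorithm into its open-cycle or bumping subcases --- cases~(3) and~(4) of Definition~\ref{definition:addsingleton} and cases~(2) and~(4) of Definition~\ref{definition:addpair} --- because there the placement of the new domino is dictated by global shape data rather than by the local pattern alone. The content of the invariance lemma is exactly what is needed to show that, although the absolute positions of the dominos $i-1,i$ may depend on this global data, their relative position as seen by the $\tau$-rule does not; establishing this uniformly, as in \cite{G93,G93HC}, is where the real work lies.
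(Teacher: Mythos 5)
Your overall architecture --- reduce both sides to local criteria at the indices $i-1,i$, then trace the insertion steps case by case, with $e_2+e_1$ handled separately via the column criterion --- is exactly the approach the paper takes, which simply transports the case analysis of \cite[Proposition 7.1]{M16} to type $D$ and checks $e_2+e_1$ directly. The gap is in your ``invariance lemma,'' whose stated justification fails on two concrete points. First, you assert that moving $\T_1$ through an open cycle ``preserves tableau shape''; it does not --- changing the shape is precisely what makes a cycle open (see case (4) of Definition~\ref{definition:addsingleton}, where the move removes one square and adds another). The fact you need --- that moving through cycles does not disturb membership in the $\tau$-invariant --- is true, but it is a theorem of Garfinkle, not a consequence of shape preservation.

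Second, your claim that the bumping ``only relocates dominos of strictly larger label, so none of them can create or destroy membership of $\alpha$ in $\tau(\T_1)$'' does not cover all cases. Pairs in $\sigma$ are ordered by their largest coordinate, so a pair $(m,j)^\epsilon$ with $m<i-1<i<j$ is processed after the dominos labelled $i-1$ and $i$ have already been placed, and the Garfinkle insertion of the domino labelled $m$ bumps dominos with labels larger than $m$ --- which may well include $i-1$ and $i$ themselves. So the dominos whose relative position you are tracking can be relocated after the steps you examine; what must be shown is that the $\tau$-relevant relation between them is preserved under such later insertions (and under later cycle moves), which is again the content of Garfinkle's insertion lemmas in \cite{G90,G93HC} rather than something that follows from comparing labels alone. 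With those two points repaired --- i.e., with the invariance lemma deduced from Garfinkle's cycle and insertion theorems rather than the reasons you give --- your case check goes through and reproduces the argument of \cite{M16} that the paper invokes.
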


\begin{proof} This is proved in essentially the same way as \cite[Proposition 7.1]{M16}, omitting the case $\alpha=2e_1$, which does not arise in type $D$.  It is replaced by the case $\alpha=e_2+e_1$, which is again easily handled directly.
\end{proof}

\section{$H$ commutes with wall-crossing operators}

We complete our program of showing that the map $H$ computes annihilators by showing that it commutes with wall-crossing operators.

\begin{proposition}\label{proposition:wall-cross} Let $T$ be a wall-crossing operator $T_{\alpha\beta}, T_{D\beta}$, or $T_{\beta D}$.  Let $\sigma$ be a clan in the domain of $T$.  Then the clan or clans in $T\sigma$ have domino tableaux obtained by applying $T$ to the domino tableau $\T_1$ in $H(\sigma)$.
\end{proposition}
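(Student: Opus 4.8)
The plan is to establish the compatibility of $H$ with each of the three families of wall-crossing operators separately, reducing in every case to a finite verification governed by the combinatorial effect of the operator on the relevant dominos. Since both the cross action/Cayley transform description of wall-crossing on clans (from the discussion preceding this section) and the domino-tableau definition of $T_{\alpha\beta}$, $T_{D\beta}$, $T_{\beta D}$ (from Section 7) preserve tableau shape and act only on a bounded set of dominos, the essential content is local.

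First I would treat the adjacent-root operators $T_{\alpha\beta}$ with $\alpha,\beta$ equal to $e_i-e_{i-1},e_{i+1}-e_i$ (or the reverse) for $i\ge 2$. Here the situation is formally identical to type $A$/type $C$: the operator is implemented on clans by an $\textup{In}$-interchange or a Cayley transform as in Proposition~\ref{proposition:crossaction}, and on $\T_1$ by the type-$A$-style domino wall-crossing of \cite{M16}. I would argue by induction on the insertion algorithm of Definitions~\ref{definition:addsingleton} and~\ref{definition:addpair}, tracking how swapping the roles of indices $i-1,i,i+1$ in $\sigma$ propagates through the bumping procedure; because the algorithm for $SO^*(2n)$ is so close to its $Sp(p,q)$ counterpart, the argument of \cite[\S8]{M16} should transfer with only notational changes. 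The genuinely $D$-type cases are $(\alpha,\beta)=(e_3-e_2,e_2+e_1)$ and its transpose, which involve the $2$- and $3$-dominos; here I would invoke the definition from \cite[4.3.5]{MP16} and the key structural fact already recorded above—that the cycle through the $3$-domino is always \emph{closed} because a doubled-partition shape cannot support a cycle that is simultaneously up and down in the sense of \cite[\S3]{G93}. This closedness is precisely what guarantees the domino operator is single-valued in the expected way and is the hinge on which the verification turns.

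Next I would handle the rank-$4$ operators $T_{D\beta}$ and $T_{\beta D}$. The strategy is to exploit the fact, asserted at the end of Section~5, that each of these operators on clans is a \emph{composition} of cross actions and Cayley transforms through the simple roots of the $D_4$ subsystem $R$. I would therefore write $T_{D\beta}$ (resp.\ $T_{\beta D}$) as an explicit word in the rank-$2$ operators already shown to be $H$-compatible, and likewise express the domino-tableau operator from Section~7 (interchanging the $3$- and $4$-dominos, then possibly moving through closed cycles) as the matching word on $\T_1$. Compatibility of the composite then follows by chaining the rank-$2$ compatibilities established in the first step. The single-valued versus double-valued dichotomy must be matched carefully: when the cycle of the $4$-domino includes the $3$-domino both clan and tableau images can split, and when it does not, the tableau image is single even though the clan image may still consist of two clans $\sigma_1,\sigma_2$; in that event I would verify directly that the insertion algorithm assigns the same $\T_1$ to both $\sigma_i$, as already anticipated in Section~7.

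The main obstacle, I expect, is the bookkeeping in the $D$-type adjacent case and in the double-valued branch of the rank-$4$ operators, where moving the domino tableau through an open cycle interacts nontrivially with the sign-adjustment steps of the insertion algorithm (cases~(4) of Definition~\ref{definition:addsingleton} and~(2)--(3) of Definition~\ref{definition:addpair}). One must check that the \emph{shape} change induced by the wall-crossing operator on $\T_1$ agrees, square for square, with the shape produced by re-running the insertion algorithm on the image clan, and that the two possible sign patterns in a double-valued Cayley transform land in the correct equivalence classes $\oT_2$. Since the proposition only asserts agreement of the domino tableau $\T_1$ and not of $\oT_2$—consistent with the repeated remark that there is no uniform recipe for the action on signed tableaux—I can set the signed-tableau component aside and concentrate entirely on shape and domino labels, which keeps the verification finite and, modulo the closed-cycle lemma, essentially mechanical.
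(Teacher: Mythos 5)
Your treatment of the adjacent-root operators $T_{\alpha\beta}$ with $i\ge 2$ matches the paper, which likewise defers to the argument of \cite[Proposition 8.1]{M16}; and your handling of the pairs involving $e_2+e_1$ is at least in the same spirit as the paper's, which disposes of that case by ``a direct calculation in rank 3 together with \cite[2.2.9]{G92}'' --- though you only point at the closed-cycle fact as ``the hinge'' without carrying out the finite verification. The genuine gap is in the rank-4 case. You propose to write $T_{D\beta}$ and $T_{\beta D}$ as explicit words in the rank-2 wall-crossing operators already shown to be $H$-compatible, and to match those words against corresponding words of tableau operations. But no such factorization exists in general. What Section 5 asserts is that $T_{D\beta}$ is implemented by a composition of \emph{cross actions and Cayley transforms} attached to roots of $R$; these elementary operations are not themselves wall-crossing operators (they need not have the prescribed effect on $\tau$-invariants), and $H$ is nowhere shown --- or claimed --- to intertwine an individual cross action or Cayley transform with any operation on domino tableaux. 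Moreover, if $T_{D\beta}$ were a word in the $T_{\alpha\beta}$, it would be redundant as a generator of cells, contrary to the role these branch-node operators play in \cite{GV92} and \cite[Theorem 7]{M98}; that they provide something beyond the rank-2 operators is the very reason they are introduced. So the step ``compatibility of the composite then follows by chaining the rank-2 compatibilities'' does not go through.

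The paper closes this case by a different mechanism: a uniqueness argument resting on the bijectivity of $H$ (Theorem~\ref{theorem:bijection}, used as an explicit forward reference). By bijectivity, the tableau or tableaux in $T\T_1$ correspond to well-defined clans; these clans arise from $\sigma$ by a composition of cross actions and Cayley transforms in $R$, hence lie in the orbit of $\sigma$ under the Weyl group $W'$ of type $D_4$; by the already-established rank-2 compatibility they satisfy the $\tau$-invariant conditions characterizing type $\mathcal D$ (resp.\ $\mathcal A_\beta$); and the only clans in the $W'$-orbit of $\sigma$ satisfying those conditions are exactly the values of $T\sigma$. To repair your argument you would either have to prove that $H$ intertwines individual cross actions and Cayley transforms with tableau-level moves (essentially redoing Garfinkle's theory in this setting), or adopt this characterization-by-uniqueness route; your proposal as written supplies neither.
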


\begin{proof} If $T=T_{\alpha\beta}$ where $\alpha,\beta$ are adjacent simple roots with neither of these being $e_2+e_1$, then this follows as in the proof of \cite[Proposition 8.1]{M16}.  If one of these roots is $e_2+e_1$, then this follows by a direct calculation in rank 3 together with \cite[2.2.9]{G92}.  If $T=T_{D\beta}$ or $T_{\beta D}$ then it follows in all cases from the bijectivity of $H$ (proved below) that the clan or clans corresponding to the tableaux in $T\T_1$ are obtained from $\sigma$ by a composition of cross actions and Cayley transforms, so arise from $\sigma$ by the action of the Weyl group $W'$ of type $D_4$ corresponding to the root system $R$ used to define $T$ and lying in the Weyl group $W_n$ of $G$.  Since $H$ is already known to commute with wall-crossing operators of rank 2, the clan or clans satisfy the conditions to be of type $\mathcal D$ or $\mathcal A_\beta$ (according as $T=T_{D\beta}$ or $T=T_{\beta D}$).  But the only clan or clans satisfying these conditions and arising from $\sigma$ via the action of $W'$ are the value or values of $T\sigma$, whence the result follows.  (As mentioned above, it is possible for $T\T_1$ to have just one value while $T\sigma$ has two, but in that case the domino tableaux attached to the clans in $T\sigma$ are always the same.)
\end{proof}

\begin{theorem}\label{theorem:annihilators} Let $\sigma\in\mathcal S_n$.  Then the first coordinate $\T_1$ of $H(\sigma)$ parametrizes the annihilator of the simple Harish-Chandra module corresponding to $\sigma$ via the classification of \cite[Theorem 4.8.3]{MP16}.  The modules corresponding to $\sigma,\sigma'$ lie in the same cell whenever the second coordinates $\oT_2,\oT_2'$ of $H(\sigma),H(\sigma')$ agree up to changing the signs in rows of odd length.
\end{theorem}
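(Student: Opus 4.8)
The plan is to establish the two assertions in sequence, relying on the machinery already assembled. For the first assertion—that $\T_1$ parametrizes the annihilator—the strategy is to reduce to the known classification of annihilators by primitive-cell invariants. The annihilator of a simple Harish-Chandra module of trivial infinitesimal character is determined by the $W$-module structure of its cell, which in type $D$ is encoded by a domino tableau via \cite[Theorem 4.8.3]{MP16}. I would argue that the map $\sigma \mapsto \T_1$ respects all the invariants that pin down the annihilator: concretely, the annihilator of a simple module is the unique maximal ideal whose associated cell is characterized by its behavior under $\tau$-invariants and wall-crossing (equivalently, coherent continuation) operators.

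First I would invoke Proposition~\ref{proposition:tau}, which gives that $\alpha \in \tau(\sigma)$ if and only if $\alpha \in \tau(H(\sigma))$, so that $\sigma$ and $\T_1$ have identical $\tau$-invariants. Next I would invoke Proposition~\ref{proposition:wall-cross}, which shows that $H$ intertwines the action of every wall-crossing operator $T_{\alpha\beta}$, $T_{D\beta}$, $T_{\beta D}$ on clans with the corresponding operator on domino tableaux. Since the $\tau$-invariant together with the action of these wall-crossing operators suffices to distinguish the primitive ideals (this is the standard fact, due to Vogan and Garfinkle, that coherent families of simple modules with matching $\tau$-invariants and matching wall-crossing behavior have the same annihilator—the operators generate enough of the coherent continuation action to separate cells), these two propositions combine to show that $\T_1$ is a complete invariant of the annihilator. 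The classification of \cite[Theorem 4.8.3]{MP16} then identifies precisely which domino tableau corresponds to which annihilator, and bijectivity of $H$ (asserted as proved below in the excerpt) guarantees that distinct annihilators yield distinct $\T_1$.

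For the second assertion, concerning cells, the plan is to observe that two simple modules lie in the same cell exactly when they are linked by a chain of wall-crossing (Hecke-algebra) operators; equivalently, the cell is the equivalence class generated by the action of $T_{\alpha\beta}$ and the rank-$4$ operators. The associated variety is a cell invariant: as noted in \S6 and \S7 of the excerpt, every wall-crossing operator acts trivially on associated varieties, and the associated variety is the closure of a single nilpotent orbit \cite[5.2]{T07} parametrized by the signed tableau data underlying $\oT_2$. I would show that if $\oT_2$ and $\oT_2'$ agree up to changing signs in odd-length rows, then after the normalization described at the end of \S6 (all odd double rows beginning with $+$) they give the same signed tableau, hence the same nilpotent orbit and the same associated variety. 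Combined with the triviality of the wall-crossing action on associated varieties, this places the two modules in the same cell.

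The main obstacle I expect is the first assertion's reliance on the claim that $\tau$-invariant data plus wall-crossing compatibility genuinely determines the annihilator uniquely—this is the substantive representation-theoretic input, and the delicate point is that the rank-$2$ operators alone do not separate all cells in type $D$ (which is exactly why the rank-$4$ operators $T_{D\beta}$, $T_{\beta D}$ of \S5 were introduced). I would need to be careful that the full collection of operators handled in Proposition~\ref{proposition:wall-cross}, together with the $\tau$-invariant, exhausts the separating invariants in the Garfinkle--Vogan framework for $D_n$; the cited \cite[2.2.9]{G92} and the $D_4$ analysis are what make this work, and I would lean on the parallel argument in \cite[\S\S7--8]{M16} for Sp$(p,q)$, flagging only the points where type $D$ genuinely differs (the role of $e_2+e_1$ and the doubled-partition shape constraint that forces certain cycles to be closed).
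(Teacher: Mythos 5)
Your argument for the first assertion follows the paper's route: the whole point of Sections~8 and~9 is that Proposition~\ref{proposition:tau} and Proposition~\ref{proposition:wall-cross}, fed into the standard Garfinkle--Vogan generalized $\tau$-invariant machinery together with the classification of \cite[Theorem 4.8.3]{MP16}, force $\T_1$ to compute the annihilator, and your remark that the rank-$4$ operators are exactly what is needed to separate primitive ideals in type $D$ is the correct delicate point.

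The second assertion is where you have a genuine gap: your inference runs in the wrong logical direction. From the two facts you invoke --- that cells are generated by the operators $T_{\alpha\beta}$, $T_{D\beta}$, $T_{\beta D}$, and that these operators act trivially on associated varieties --- all that follows is that two modules \emph{in the same cell} have representatives $\T_2,\T_2'$ agreeing up to sign changes in odd double rows. That is the \emph{converse} of what the theorem asserts, and it is precisely what the paper records in the paragraph immediately following the theorem (citing \cite[Theorem 7]{M98} and \cite[Theorem 4.6.2]{MP16}). Nothing in those facts rules out a single fiber of $\sigma\mapsto\T$ breaking into several cells, each separately stable under all the wall-crossing operators; ``same associated variety'' does not formally imply ``same cell.'' Indeed the paper itself shows this implication is not a triviality: in the proof of Theorem~\ref{theorem:bijection} it notes that for the disconnected group $G'=O^*(2n)$ there are \emph{two} cells for every nilpotent orbit whose partition has an odd part, so the implication depends on which group one works with. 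What closes the gap is the explicit cell structure for these groups from \cite{M98} (Theorems 7 and 10): each cell is contained in a single fiber (the direction you do obtain), and the enumeration of cells there matches the enumeration of fibers, so each fiber is exactly one cell. That counting input from \cite{M98}, not the triviality of wall-crossing on associated varieties, is the substantive ingredient missing from your proof of the second assertion.
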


Since the operators $T_{\alpha\beta},T_{D\beta},T_{\beta D}$ generate the cells of Harish-Chandra modules of trivial infinitesimal character for $G_0$, by \cite[Theorem 7]{M98} and \cite[Theorem 4.6.2]{MP16}, the clans $\sigma,\sigma'$ corresponding to two such modules in the same cell are such that any representatives $\T_2,\T_2'$ of the equivalence classes $\oT_2,\oT_2'$ attached to $\sigma,\sigma'$ by $H$ agree up to changing the signs in double rows of odd length.  

\section{$H$ is a bijection}

\begin{theorem}\label{theorem:bijection} The map $H$ defines a bijection between $\mathcal S_n$ and ordered pairs $(\T_1,\oT_2)$, where $\T_1$ is a domino tableau with shape a doubled parition of $2n$ and $\oT_2$ is an equivalence class of signed tableaux of the same shape as $\T_1$ such that the two rows in every double row begin with the same sign.
\end{theorem}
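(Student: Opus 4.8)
The plan is to prove bijectivity by exhibiting an explicit two-sided inverse to $H$, built by running the insertion algorithm of Definitions~\ref{definition:addsingleton} and~\ref{definition:addpair} backwards. Both $\mathcal S_n$ and the set of admissible pairs $(\T_1,\oT_2)$ are finite, so a single map $G$ with $G\circ H=\mathrm{id}$ and $H\circ G=\mathrm{id}$ suffices. The guiding observation is that the algorithm processes the entries of $\sigma$ in increasing order of their largest (or only) coordinate, so the unique entry of $\sigma$ containing the index $n$ is inserted last and contributes the domino, or pair of dominos, carrying the largest label. I would therefore define $G$ recursively: given $(\T_1,\oT_2)$, locate the top (largest-label) domino of $\T_1$, read off from its position and orientation together with a normalized representative $\T_2$ of $\oT_2$ the final entry of $\sigma$, delete the corresponding domino(s) from $\T_1$ and the matching squares from $\T_2$, and recurse on the smaller admissible pair.

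The technical core is to check that every branch of the forward algorithm is reversible and that the correct branch is forced by the output. I would first catalogue the data visible in $(\T_1,\oT_2)$: whether the top domino is horizontal or vertical, the parity of the length of its row before insertion, and whether that row is a freshly created double row or an existing one. These distinguish among the four cases of Definition~\ref{definition:addsingleton} and the four cases of Definition~\ref{definition:addpair}, and, combined with the signs at the relevant row-ends of $\T_2$, recover both the shape of the last entry---a singleton $(n,\epsilon)$ versus a pair $(a,n)^\epsilon$---and its sign $\epsilon$. When the last entry is a pair, its partner domino labelled $a$ and the sign $\epsilon$ are recovered by inverting the bumping insertion of \cite{G90} exactly as one inverts the Robinson--Schensted correspondence, followed by the parity bookkeeping that governs the four subcases of Definition~\ref{definition:addpair}.

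Because $G$ takes as input only the class $\oT_2$, the reverse step must not depend on the chosen representative $\T_2$; establishing this is the analogue of the well-definedness of $H$ verified after Definition~\ref{definition:addpair}, and I expect it to be the main obstacle. The difficulty is that two representatives differ by changing all signs in a pair of odd double rows joined by an open cycle, and such a change can flip precisely the row-end signs that the reverse step reads. I would resolve this by showing that the recovered entry depends on $\oT_2$ only through invariants of the equivalence class: after imposing the normalization that every odd double row begins with $+$, the sign $\epsilon$ produced at each reverse step is determined by the open-cycle structure relating the top domino to the rest of $\T_1$, which is unchanged by the permitted sign moves. A short lemma to this effect---that moving through the open cycle used in the forward direction alters neither the label, position, nor orientation of the top domino---makes the reverse of each cycle move well-defined and returns $\T_1$ to its pre-insertion shape; the fact, used already in the preceding sections, that the cycles in question cannot be simultaneously up and down guarantees there is no ambiguity about which cycle to undo.

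Finally, as an independent check I would compute both cardinalities: $|\mathcal S_n|$ on one side, and on the other the sum over doubled partitions $\lambda$ of $2n$ of the number of domino tableaux of shape $\lambda$ times the number of sign-equivalence classes of signed tableaux of shape $\lambda$ whose double rows begin with equal signs. Agreement of these counts corroborates the construction and, together with the injectivity supplied by $G\circ H=\mathrm{id}$, gives a second, purely enumerative route to surjectivity should the direct verification $H\circ G=\mathrm{id}$ prove delicate in the extremal cases (for instance when a new double row is created, or when $n$ is even and the shape is a single doubled column).
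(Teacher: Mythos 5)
Your route (an explicit reverse-insertion inverse, in the spirit of inverse Robinson--Schensted) is entirely different from the paper's, which deduces both injectivity and surjectivity representation-theoretically from the structure of Harish-Chandra modules for the disconnected group $O^*(2n)$, quoting \cite{MP16} for surjectivity and the cell count of \cite{M98} for injectivity. That difference would be fine, but the step on which your whole construction rests --- the well-definedness lemma --- is false as you state it. You propose to make the reverse step independent of the representative of $\oT_2$ by ``imposing the normalization that every odd double row begins with $+$'' and then reading everything else off the open-cycle structure of $\T_1$. But that normalization is not a choice of representative: each equivalence move flips the leading signs of exactly \emph{two} odd double rows, so the parity of the number of odd double rows beginning with $-$ is an invariant of the class $\oT_2$, and a class with odd parity contains no normalized representative at all. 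Worse, normalization identifies classes that the theorem forces to remain distinct. Already for $n=1$, the clans $(1,+)$ and $(1,-)$ have the same $\T_1$ (a single vertical domino) and distinct singleton classes $\oT_2\ne\oT_2'$ (both squares $+$, resp.\ both squares $-$); these have identical normalizations and identical cycle data, so an inverse that reads only normalized data returns the same clan for both pairs, contradicting $G\circ H=\mathrm{id}$. This is not a low-rank accident: flipping the signs of a single odd double row always changes the equivalence class and corresponds to a different clan --- this distinction is exactly the content of Corollary \ref{corollary:bijection}. So the sign data of $\oT_2$ beyond its normalization is essential input to any inverse, and your claimed invariance cannot hold.

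To repair your approach you would have to define the reverse step on an honest representative $\T_2$ and prove invariance only under the two moves that actually generate the equivalence (pairs of odd double rows of equal length; pairs of odd double rows of different lengths joined by an open cycle with its hole in one and its corner in the other) --- which is precisely the hard combinatorial work your sketch defers. You would also need to prove, not merely assert, the disambiguation of the last entry: a vertical domino labelled $n$ at the end of a double row arises both from a singleton insertion (Definition \ref{definition:addsingleton}) and from the second half of a pair insertion (cases (2)--(4) of Definition \ref{definition:addpair}), and separating the two requires the sign bookkeeping that the paper itself points to (singleton insertions add two equal signs, while pair insertions add either two $+$'s and two $-$'s or four equal signs). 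As it stands, the proposal is a plan whose central lemma fails, while the paper's argument avoids the difficulty entirely by working with modules for $O^*(2n)$.
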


\begin{proof} We use the group $G'=O^*(2n)$ of matrices in $O(2n,\mathbb C)$ preserving the skew-Hermitian form used to define $G$.  This group has two components, with $G$ as its identity component.  Every simple Harish-Chandra module $Z$ for $G$ either already has the structure of a Harish-Chandra module for $G'$, in which case there is exactly one other simple Harish-Chandra module $Z'$ for $G'$ isomorphic to $Z$ as a module for $G$, or else there is a unique simple Harish-Chandra module $Z'$ for $G$ not isomorphic to $Z$ such that the direct sum of $Z$ and $Z'$ is a simple module for $G'$.  Now the surjectivity is proved in the same way as for \cite[Theorem 9.1]{MP16}, interchanging the  roles of double rows of even and odd length, and bearing in mind that two $+$ signs and two $-$ signs are added to the signed tableau in cases (1) and (2) of Definition 6.2, while four equal signs are added to this tableau in cases (3) and (4) of that definition.  Then injectivity follows since  by \cite{M98} there are exactly two cells of simple Harish-Chandra modules for $G'$ of trivial infinitesimal character for every nilpotent orbit in $\mathfrak g_0$ corresponding to a doubled partition with at least one odd part and one such cell corresponding to each of the two orbits with partition $\mathbf p$ for every doubled partition $\mathbf p$ of $2n$ with no odd parts.
\end{proof}

\begin{corollary}\label{corollary:bijection} Given an ordered pair $(\T_1,\oT_2)$ in the range of $H$ and a representative $\T_2$ of $\oT_2$, exactly one of $(\T_1,\oT_2)$ and $(\T_1,\oT_2')$ lies in the range of $H$ when restricted to $\mathcal S_n'$, where $\T_2'$ is obtained from $\T_2$ by changing the signs in just one odd double row, provided that $\T_2$ has at least one double row.  If all double rows of $\T_2$ are even, so that $\oT_2$ consists of $\T_2$ alone, then $(\T_1,\oT_2)$ lies in the range of $H$ restricted to $\mathcal S_n$ if and only if the number $n_v$ of vertical dominos in $\T_1$ is congruent to 0 mod 4.
\end{corollary}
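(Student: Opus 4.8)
The plan is to read off the parity of the preimage clan from the pair, using that $H:\mathcal S_n\to\{(\T_1,\oT_2)\}$ is a bijection (Theorem \ref{theorem:bijection}). Thus every admissible pair equals $H(\sigma)$ for a unique $\sigma$, and both assertions amount to deciding when this $\sigma$ is even. The relevant clan statistic is $\epsilon(\sigma)=\#\{(a,b)^+\}+\#\{(a,-)\}\bmod 2$, which vanishes precisely on $\mathcal S_n'$; the whole task is to express $\epsilon(\sigma)$ through $(\T_1,\oT_2)$.

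First I would track $\epsilon$ through the insertion algorithm of Definitions \ref{definition:addsingleton} and \ref{definition:addpair}. Only the insertion of a minus singleton $(a,-)$ or a plus pair $(a,b)^+$ flips $\epsilon$; inserting $(a,+)$ or $(a,b)^-$ leaves it fixed. I would match this against the candidate tableau invariant $\pi(\T_1,\oT_2)=o_-(\oT_2)+\tfrac12 n_v\bmod 2$, where $o_-(\oT_2)$ is the number of odd double rows whose leading sign is $-$ (well defined modulo $2$, since the equivalence relation of \S6 reverses signs only in \emph{pairs} of odd double rows) and $n_v$ is the number of vertical dominoes. Here $\tfrac12 n_v$ is an integer on the shapes that occur, since $n_v\equiv n\pmod 2$ (domino flips preserve $n_v\bmod 2$ and the all-vertical tiling has $n_v=n$) and $n$ is even whenever every part is even. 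The one delicate point is that cases (2)--(4) move $\T_1$ through open cycles, reversing domino orientations and hence changing $n_v$ and possibly $o_-$; I would verify step by step that the combined effect changes $\pi$ by exactly the amount by which that step changes $\epsilon$, so that $\sigma$ is even iff $\pi=0$.

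Granting this, both parts are short. For Part 1, with an odd double row present, passing from $\oT_2$ to $\oT_2'$ reverses the leading sign of a single odd double row, so it flips $o_-\bmod 2$ and hence $\pi$; since the equivalence never reverses a single odd row, $\oT_2\neq\oT_2'$, both pairs lie in the full range, and their preimages have opposite parity. Exactly one is therefore even, i.e.\ lies in $H(\mathcal S_n')$. For Part 2 all double rows are even, so $o_-(\oT_2)=0$ and $\pi=\tfrac12 n_v\bmod 2$; thus $\sigma$ is even exactly when $n_v\equiv0\pmod 4$.

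The hard part is the cycle bookkeeping of the second paragraph, and in particular fixing the residue $0$ (rather than $2$) in Part 2. I would anchor this representation-theoretically as in \cite{MP16,G90}: on an all-even shape the annihilator orbit is very even and splits into two $SO$-orbits separated precisely by $n_v\bmod 4$, and these are matched with the even and odd clans. To pin the normalization I would check the smallest very-even shapes directly and then propagate: by the remark following Proposition \ref{proposition:crossaction} the cross action and Cayley transform preserve evenness, so $\epsilon$ is constant along the wall-crossing orbits, while $\pi$ is likewise constant because $H$ commutes with the wall-crossing operators (Propositions \ref{proposition:tau} and \ref{proposition:wall-cross}); since these operators generate the cells, a single base case on each shape then determines $\pi$ everywhere.
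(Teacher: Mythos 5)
Your strategy---define a parity statistic $\pi$ on pairs $(\T_1,\oT_2)$, show by induction through Definitions \ref{definition:addsingleton} and \ref{definition:addpair} that it computes the parity of the clan, and then read off both assertions---is a genuinely different route from the paper's proof, which instead quotes the module structure of cells from \cite[Theorem 10]{M98} together with the criterion of \cite[Corollary 4.1.4]{MP16}. Unfortunately your statistic $\pi=o_-+\tfrac12 n_v\bmod 2$ is wrong, and the verification you defer (``I would verify step by step'') is exactly where it breaks. First, $\tfrac12 n_v$ is not an integer when $n$ is odd (already for $n=1$, where $n_v=1$); your justification covers only all-even shapes, whereas Part 1 concerns arbitrary shapes. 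Second, and decisively, the formula fails at $n=2$: running Definition \ref{definition:addpair} on the \emph{even} clan $(1,2)^-$ produces $\T_1$ equal to two vertical dominoes stacked in shape $(1,1,1,1)$ and $\oT_2$ the class of the signed tableau with all four rows beginning with $+$, so $(o_-\bmod 2,\,n_v)=(0,2)$ and $\pi=1$; yet the \emph{even} clan $(1,+)(2,+)$ yields shape $(2,2)$ with two horizontal dominoes and $\pi=0$. Thus $\pi$ is not constant on even clans and cannot detect parity. The failure occurs at the very first pair insertion: cases (3) and (4) of Definition \ref{definition:addpair} add two vertical dominoes, flipping $\tfrac12 n_v\bmod 2$, while leaving both $o_-$ and the clan parity unchanged. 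Moreover, no repair using $o_-$ and $n_v$ alone is possible: the \emph{odd} clan $(1,+)(2,-)$ gives $(o_-\bmod 2,\,n_v)=(0,2)$ as well, the same values as the even clan $(1,2)^-$, so any correct statistic must record how the vertical dominoes of $\T_1$ sit relative to the odd and even double rows, not merely their number.

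Your closing paragraph (parity is preserved by cross actions and Cayley transforms, $H$ commutes with wall-crossing, so base cases propagate along cells) cannot rescue this: propagation can normalize a statistic that is already known to be well defined on each cell, but it cannot create one, and the substantive claim of Part 1---that reversing the signs of a single odd double row reverses the parity of the $H$-preimage---is precisely the inductive assertion that remains unproved (and, as formulated through $\pi$, is false). Note also that the two pairs $(\T_1,\oT_2)$ and $(\T_1,\oT_2')$ of Part 1 never lie in a common cell, so cell propagation alone cannot reach this statement. To complete an argument along your lines you would have to carry out the case-by-case bookkeeping with a corrected statistic; the paper avoids all of this by deducing both claims directly from \cite[Theorem 10]{M98} and the $n_v\equiv 0\pmod 4$ criterion of \cite[Corollary 4.1.4]{MP16} for pairs of even-rowed domino tableaux to come from the Weyl group of type $D_n$.
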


\begin{proof} This follows from the module structure of the cells in $G$ described in \cite[Theorem 10]{M98}, together with the criterion in \cite[Corollary 4.1.4]{MP16} for an ordered pair $(\T_1,\T_2)$ of domino tableaux of the same doubled partition shape with only even rows to correspond to an element of the Weyl group $W$ of type $D_n$ in the bijection of \cite{G90}.  
\end{proof}

Given now an ordered pair $(\T_1,\oT_2)$ in the range of $H$ with $\T_1$ having at least odd double row and tableaux $\T_2,\T_2'$ as in the statement of the corollary, exactly one of the clans $\sigma,\sigma'$ corresponding to the ordered pairs $(\T_1,\oT_2),(\T_1,\oT_2')$ is even and so corresponds to a simple Harish-Chandra module $Z$ for $G$.  We regard the other clan as parametrizing the other simple Harish-Chandra module for $G'$ with the same restriction to $G$ as $Z$.  If on the other hand $\T_1$ has only even rows and $n_v$ is a multiple of 4, then the clan $\sigma$ corresponding to $(\T_1,\oT_2)$ corresponds to a module $Z$ in the principal block $B$ for $G$. If $n_v$ is not a multiple of 4, then $Z$ lies in the nonprincipal block $B'$ of $G$.  Such modules $Z$ pair up with unique ones $Z'$ in the principal block in such a way that the direct sum of $Z$ and $Z'$ is a simple Harish-Chandra module for $G'$.

It remains to show that if $\sigma\in\mathcal S_n, H(\sigma) = (\T_1,\oT_2)$, and if $\T_2$ is a representative of $\oT_2$, then the module $Z$ corresponding to $\sigma$ has associated variety the closure of the $K$-orbit in $\mathfrak p$ corresponding to a (suitable normalization of) $\T_2$ via the Kostant-Sekiguchi correspondence.  This we do in the next and final section.

\section{Associated varieties} 

We recall that nilpotent $G'$-orbits in $\mathfrak g_0$ are parametrized by signed tableaux with shape a doubled partition of $2n$ such that odd rows begin with $+$.  Nilpotent $G$-orbits in $\mathfrak g_0$ are parametrized in the same way, except that the single $G'$-orbit corresponding to a signed tableau with only even rows splits into two $G$-orbits.  Given a clan $\sigma\in\mathcal S_n$ with $H(\sigma) = (\T_1,\oT_2)$ and a representative $\T_2$ of $\oT_2$, let $\T$ be obtained from $\T_2$ by changing signs as necessary to make its odd rows begin with $+$.

\begin{theorem}\label{theorem:associated varieties} Let $\sigma\in\mathcal S_n', H(\sigma)=(\T_1,\oT_2)$, and define $\T$ from $\oT_2$ as above.  The module $Z$ for $G$ corresponding to $\sigma$ has associated variety the unique one parametrized by $\T$ if $\T$ has at least one odd row, or one of the two parametrized by $\T$ if $\T$ has only even rows (the same orbit for all clans $\sigma$ with the same $\T$). 
 \end{theorem}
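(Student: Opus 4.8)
The plan is to prove that the associated variety of $Z$ is computed by $\T$ via Kostant--Sekiguchi by leveraging the compatibility results already established, reducing the geometric statement to a combinatorial one about cells. First I would recall from \cite[5.2]{T07} that the associated variety of a simple Harish-Chandra module is the closure of a single nilpotent $K$-orbit in $\mathfrak p$, and that via Kostant--Sekiguchi these orbits are parametrized exactly by the signed tableaux described (with odd rows beginning with $+$, and the even-only tableaux corresponding to a choice between two orbits). Thus it suffices to show that the specific signed tableau $\T$ attached to $\sigma$ by the algorithm $H$ is the correct invariant.

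The key reduction is that associated varieties are constant on cells: two simple modules in the same cell have the same associated variety, a fact I would invoke alongside Theorem~\ref{theorem:annihilators}, which tells us that $\sigma,\sigma'$ lie in the same cell whenever their second coordinates $\oT_2,\oT_2'$ agree up to changing signs in odd double rows. The remark following that theorem, combined with \cite[Theorem 7]{M98} and \cite[Theorem 4.6.2]{MP16}, shows the wall-crossing operators $T_{\alpha\beta}, T_{D\beta}, T_{\beta D}$ generate the cells. Since these operators act trivially on associated varieties (noted repeatedly in \S4--\S7) and, by construction, preserve the equivalence class $\oT_2$ up to the allowed sign changes, I would argue that the normalized tableau $\T$ is a genuine cell invariant: it is unchanged (as an invariant, after renormalizing odd rows to begin with $+$) under all the generators of the cell structure. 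Hence $\T$ depends only on the cell, and so does the associated variety; it remains to match these two cell invariants.

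To fix the match I would use a base-case or rank argument: the correspondence $\T \leftrightarrow \mathcal O_{\T}$ is pinned down on one representative per cell, where the associated variety can be computed directly. Here the bijectivity of $H$ (Theorem~\ref{theorem:bijection}) is essential, since it guarantees that for each target pair $(\T_1,\oT_2)$ there is a clan realizing it, and that the shape of $\T_1$ — a doubled partition of $2n$ recording the annihilator — is compatible with the partition underlying $\T$; one checks that the shape of $\T_2$ (doubled partition, double rows of equal length with matching initial signs) agrees with the partition of the nilpotent orbit, so the dimension/closure data line up. For the degenerate case where $\T$ has only even rows, I would note that the two $G$-orbits splitting from the single $G'$-orbit are not distinguished by the cell invariant alone, which is exactly why the theorem asserts ``one of the two'' and why the parenthetical ``the same orbit for all clans $\sigma$ with the same $\T$'' holds: the choice is made uniformly, consistent with the block decomposition $B, B'$ governed by $n_v \bmod 4$ in Corollary~\ref{corollary:bijection}.

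The main obstacle I anticipate is verifying that the algorithm's sign bookkeeping — the careful tracking in Definitions~\ref{definition:addsingleton} and \ref{definition:addpair} of when two $+$ signs versus two $-$ signs are added, and the open-cycle manipulations — really produces the Kostant--Sekiguchi tableau and not merely some tableau of the correct shape. The shape is forced by the annihilator (Theorem~\ref{theorem:annihilators}), but the \emph{signs} encode the finer $K$-orbit structure, and since there is no uniform recipe for the wall-crossing action on signed tableaux (only on associated varieties, where it is trivial), one cannot simply transport the computation along wall-crossings without care. I would therefore isolate a family of ``easy'' clans — those built from singletons and same-sign pairs, where the Cartan involution and the $K$-orbit are transparent from \S2 — compute the associated variety by hand there (identifying the signature data of the form on $\mathfrak p$ with the signs in $\T$), and then propagate to the rest of each cell using the invariance established above. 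Getting this base case to cover at least one clan in every cell, so that the propagation reaches all of $\mathcal S_n'$, is where the real work lies.
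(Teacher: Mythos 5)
Your reduction to cell invariance is exactly the skeleton of the paper's argument: the paper likewise observes that both the normalized tableau $\T$ and the associated variety are constant across cells, so that the theorem only needs to be verified on one module per cell. The genuine gap is in your base case, and you flag it yourself (``where the real work lies''). There is no direct procedure for computing the associated variety of a simple Harish-Chandra module ``by hand'' from the signature data of its clan --- that is precisely the hard invariant the theorem is meant to compute --- and the family you propose (clans built from singletons and same-sign pairs) has no reason to meet every cell. The paper instead uses a known theorem as its computational input: for a $\theta$-stable parabolic $\mathfrak q$ with Levi $O^*(2m)\times U(p_1,q_1)\times\cdots\times U(p_r,q_r)$, the derived functor module $A_{\mathfrak q}$ has associated variety the closure of the Richardson orbit $\mathcal O$ attached to $\mathfrak q$ \cite{T05}. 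It then writes down the clan $\sigma'$ of $A_{\mathfrak q}$ explicitly, block by block, and checks that $H(\sigma')$ produces the tableau of $\mathcal O$. This settles every cell whose orbit is Richardson.

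Even this does not finish the proof, because not every nilpotent orbit of $G$ is Richardson, so a second idea is needed that your proposal also lacks: induction by stages. Given an arbitrary $Z$ with orbit $\mathcal O$, the paper induces $\mathcal O$ to an orbit $\mathcal O'$ of a higher-rank group in which every odd part of the partition has multiplicity at most $4$; such $\mathcal O'$ is Richardson by \cite[Corollary 6.2]{T05}, so the theorem holds for the correspondingly induced module $Z'$. Since $\mathcal O$ is the unique orbit inducing to $\mathcal O'$ relative to a parabolic whose Levi has $G$ as its only simple factor of type $D$, the result descends back to $Z$. Without a computable base-case family and this descent step covering the non-Richardson cells, your argument reduces the theorem to a statement you have no means of verifying, so as written it does not close.
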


\begin{proof}
The proof is similar to that of \cite[Theorem 10.1]{M16}.  Let $\mathfrak q$ be a $\theta$-stable parabolic subalgebra of $\mathfrak g_0$ whose corresponding Levi subgroup $L$ of $G'$ is $O^*(2m)\times U(p_1,q_1)\times\cdots\times U(p_r,q_r)$, where $2m+p_1+q_1+\ldots+p_r+q_r = 2n$.   There is an irreducible derived functor module $A_{\mathfrak q}$ of trivial infinitesimal character whose associated variety corresponds to the Richardson orbit $\mathcal O$ attached to $\mathfrak q$ in the sense of \cite{T05}.  Its clan $\sigma'$ has one block of terms for every simple factor in $L$.  If $m\ge1$, the first block of terms can be either $(1,2)^-\cdots(m-1,m)^-$ or $(1,2)^+(3,4)^-\cdots(m-1,m)^-$, if $m$ is even, or $1^+(2,3)^-\cdots(m-1,m)^-$ or $1^-(2,3)^-\cdots(m-1,m)^-$, if $m$ is odd.  The blocks of terms corresponding to the $U(p_i,q_i)$ factors are defined as in the proof of \cite[Theorem 10.1]{M16}, with the roles of initial $+$ and  $-$ signs reversed, so that factor $U(p,q)$ with $p>q$ corresponds to a block of terms beginning with $p-q$ singleton indices with $-$ signs attached rather than $+$ ones.  Letting $H(\sigma') = (\T_1,\oT_2)$ and defining $\T$ as above from $\oT_2$, one checks immediately that the orbit corresponding to $\T$ is indeed $\mathcal O$.  Since both $\T$ and associated varieties are constant across cells of Harish-Chandra modules, the desired result holds whenever $\T$ corresponds to a Richardson orbit.  

Given $Z$ as in the theorem, let $\bar{\mathcal O}$ be its associated variety, transferred via Kostant-Sekiguchi to a real nilpotent orbit.  Using induction by stages, we can induce $\mathcal O$ to an orbit $\mathcal O'$ for a higher rank group $G'$ such that all odd parts in the partition of $\mathcal O'$ have multiplicity at most 4, whence $\mathcal O'$ is Richardson by \cite[Corollary 6.2]{T05}.  The theorem then holds for the module $Z'$ correspondingly induced from $Z$.  But now $\mathcal O$ is the only orbit of $G$ inducing to $\mathcal O'$ relative to a suitable parabolic subalgebra $\mathfrak q$ of Lie~$G'$ having $G$ has the only simple factor of type $D$ in its Levi subgroup.  The theorem follows. 
\end{proof} 

For $G'$ the result is the same, except that the associated variety of a simple module $Z$ which is reducible over $G$ is the closure of the single $G'$-orbit (or the union of the closures of the two $G$-orbits) corresponding to $\T$; note that $\T$ has only even rows in this case.

\end{document}